\documentclass{article}
\usepackage[a4paper, total={7in, 10in}]{geometry}
\usepackage[T1]{fontenc}
\usepackage{graphicx}
\usepackage{mathtools}
\usepackage{amsmath,amsthm,amssymb,mathrsfs,amstext, titlesec,enumitem, comment, graphicx, color, xcolor, stmaryrd,mathabx}
\usepackage{thmtools}
\usepackage{xpatch}
\usepackage{tikz-cd}
\usepackage{nameref}
\usepackage{hyperref}
\makeatletter
\def\Ddots{\mathinner{\mkern1mu\raise\p@
\vbox{\kern7\p@\hbox{.}}\mkern2mu
\raise4\p@\hbox{.}\mkern2mu\raise7\p@\hbox{.}\mkern1mu}}
\makeatother
\newtheorem{theorem}{Theorem}[section]
\newtheorem{corollary}[theorem]{Corollary}

\theoremstyle{definition}
\newtheorem{definition}[theorem]{Definition}

\begin{document}
	
	\title{  Cartesian product of combinatorially rich sets- algebraic, elementary and dynamical approaches}

	\date{}
	\author{Pintu Debnath
		\footnote{Department of Mathematics,
			Basirhat College,
			Basirhat-743412, North 24th parganas, West Bengal, India.\hfill\break
			{\tt pintumath1989@gmail.com}}
	}
	\maketitle

\begin{abstract}
  Using the methods of topological dynamics,  H. Furstenberg introduced the notion of a central set and proved the famous Central Sets Theorem. D. De, N. Hindman, and D. Strauss introduced $C$-set,
satisfying the strong central set theorem. Using the algebraic structure
of the Stone-\v{C}ech compactification of a discrete semigroup, N.
Hindman and D. Strauss proved that the Cartesian product of two $C$-sets is
a $C$-set.  S. Goswami has proved the same result
using the elementary characterization of $C$-sets. In this article, we will prove
that the product of two $C$-sets is a $C$-set, using the dynamical characterization
of $C$-sets. Recently, S. Goswami has proved that the Cartesian product of two $CR$-sets is a $CR$-set, which was a question posed by  N. Hindman, H. Hosseini, D. Strauss, and M. Tootkaboni in [Semigroup Forum 107 (2023)]. Here we also prove that the Cartesian product of two essential $CR$-sets is an essential $CR$-set.
\end{abstract}

\textbf{Keywords:}  $CR$-set, $J$-set, $C$-set,
	 Algebra of the  Stone-\v{C}ech compactifications of discrete semigroups,  Dynamical systems.
	
	\textbf{MSC 2020:} 05D10, 22A15, 54D35.

\section{Introduction}
The notion of a central set was first discovered by H. Furstenberg using topological dynamics in \cite[Proposition 8.21 Page-169]{F}. Central sets are extremely important Ramsey theoretic objects, as whenever $\mathbb{N}$ is partitioned into finitely many cells,  one of them becomes central. They are also rich in combinatorial properties. Later in \cite{BH90}, V. Bergelson and N. Hindman proved an analog description of central sets using the algebra of the  Stone-\v{C}ech compactifications of discrete semigroups. Generalizing the definition of a central set, many notions of large sets were introduced,  which are also rich in combinatorial properties, for example quasi central set, $C$-set, $CR$-set, etc. In the present work, we shall investigate the presentation of their properties in the product.

We give a brief review of the algebraic structure of the Stone-\v{C}ech
compactification of discrete semigroups.

Let $S$ be a discrete semigroup. The elements of $\beta S$ are regarded as ultrafilters on $S$. Let $\overline{A}=\left\{p\in \beta S:A\in p\right\}$. The set $\{\overline{A}:A\subset S\}$ is a basis for the closed sets
of $\beta S$. The operation `$\cdot$' on $S$ can be extended to
the Stone-\v{C}ech compactification $\beta S$ of $S$ so that $(\beta S,\cdot)$
is a compact right topological semigroup (meaning that for each    $p\in\beta S$  the function $\rho_{p}\left(q\right):\beta S\rightarrow\beta S$ defined by $\rho_{p}\left(q\right)=q\cdot p$ 
is continuous) with $S$ contained in its topological center (meaning
that for any $x\in S$, the function $\lambda_{x}:\beta S\rightarrow\beta S$
defined by $\lambda_{x}(q)=x\cdot q$ is continuous). This is a famous
theorem due to Ellis that if $S$ is a compact right topological semigroup
then the set of idempotents $E\left(S\right)\neq\emptyset$. A nonempty
subset $I$ of a semigroup $T$ is called a $\textit{left ideal}$
of $S$ if $TI\subset I$, a $\textit{right ideal}$ if $IT\subset I$,
and a $\textit{two sided ideal}$ (or simply an $\textit{ideal}$)
if it is both a left and a right ideals. A minimal left ideal
is a left ideal that does not contain any proper left ideal. Similarly,
we can define a minimal right ideal and the smallest ideal.

Any compact Hausdorff right topological semigroup $T$ has the smallest
two sided ideal

$$
\begin{aligned}
	K(T) & =  \bigcup\{L:L\text{ is a minimal left ideal of }T\}\\
	&=  \bigcup\{R:R\text{ is a minimal right ideal of }T\}.
\end{aligned}$$

Given a minimal left ideal $L$ and a minimal right ideal $R$, $L\cap R$
is a group, and in particular contains an idempotent. If $p$ and
$q$ are idempotents in $T$; we write $p\leq q$ if and only if $pq=qp=p$.
An idempotent is minimal with respect to this relation if and only
if it is a member of the smallest ideal $K(T)$ of $T$. Given $p,q\in\beta S$
and $A\subseteq S$, $A\in p\cdot q$, if and only if the set $\{x\in S:x^{-1}A\in q\}\in p$,
where $x^{-1}A=\{y\in S:x\cdot y\in A\}$. See \cite{HS12} for
an elementary introduction to the algebra of $\beta S$ and any
unfamiliar details.

Let $A$ be a subset of $S$. Then $A$ is called central if and only if $A\in p$, for some minimal idempotent $p$ of $\beta S$.

A collection $\mathcal{F}\in\mathcal{P}(S)\setminus\left\{ \emptyset\right\} $
is upward hereditary if whenever $A\in\mathcal{F}$ and $A\subseteq B\subseteq S$
then it follows that $B\in\mathcal{F}$. A nonempty and upward hereditary
collection $\mathcal{F}\in\mathcal{P}(S)\setminus\left\{ \emptyset\right\} $
will be called a family. If $\mathcal{F}$ is a family, the dual family
$\mathcal{F}^{*}$ is given by,
\[
\mathcal{F}^{*}=\{E\subseteq S:\forall A\in\mathcal{F},E\cap A\neq\emptyset.\}
\]
A family $\mathcal{F}$ possesses the Ramsey property if whenever
$A\in\mathcal{F}$ and $A=A_{1}\cup A_{2}$ there is some $i\in\left\{ 1,2\right\} $
such that $A_{i}\in\mathcal{F}$.

\begin{definition}
    Let $\left(S,\cdot\right)$ be a semigroup. A family $\mathcal{F}$ is called left shift invariant iff for all $s\in S$ and for all $A\subseteq S$, $sA\in\mathcal{F}$, where $sA=\left\{sa:a\in A\right\}$.
\end{definition}

Let $\left(S,\cdot\right)$ be a discrete semigroup. Let $\mathcal{F}$ be a Ramsey family. Then by \cite[Theorem 3.11 Page-59]{HS12}, $\beta\left(\mathcal{F}\right)\neq\emptyset$,  where $\beta\left(\mathcal{F}\right)=\left\{p\in\beta S:\, p\subseteq\mathcal{F}\right\}$.  For a Ramsey family $\mathcal{F}$, if $\beta\left(\mathcal{F}\right)$ is subsemigroup of $\beta S$, then $E\left(\beta\left(\mathcal{F}\right)\right)\neq\emptyset$. If $\mathcal{F}$ is  Ramsey and shift invariant $\beta\left(\mathcal{F}\right)$ is a left ideal of $\beta S$ by \cite[Theorem 5.1.2]{C}.

\begin{definition}\label{def essential}\textbf{(Essential $\mathcal{F}$-set)}:
     Let $\mathcal{F}$ be a Ramsey family with  $\beta\left(\mathcal{F}\right)$ is a subsemigroup of $\beta S$. Then $A\subseteq S$ is called essential $\mathcal{F}$-set iff $\overline{A}\cap E\left(\beta\left(\mathcal{F}\right)\right)\neq\emptyset$.
\end{definition}

Let $\left(S,\cdot\right)$ be a semigroup and $A$ be a subset of $S$. Then for $s\in S$, we define $s^{-1}A=\left\{t:st\in S \right\}$.

\begin{definition}
Let $\left(S,\cdot\right)$ be a semigroup and $A\subseteq S$.

\begin{enumerate}
\item  The set $A$  is $IP$ set if and only if there exists a sequence
$\langle x_{n}\rangle _{n=1}^{\infty}$ in $S$ such that $FP\left(\langle  x_{n}\rangle _{n=1}^{\infty}\right)\subseteq A$.
Where 
\[
FP\left(\langle x_{n}\rangle _{n=1}^{\infty}\right)=\left\{ \prod_{n\in F}x_{n}:F\in\mathcal{P}_{f}\left(\mathbb{N}\right)\right\} 
\]
 and $\prod_{n\in F}x_{n}$ is the product in increasing order.
\item  The set $A$  is piecewise syndetic if and only if 
\[
\left(\exists H\in\mathcal{P}_{f}\left(S\right)\right)\left(\forall F\in\mathcal{P}_{f}\left(S\right)\right)\left(\exists x\in S\right)\left(Fx\subseteq\bigcup_{t\in H}t^{-1}A\right).
\]
\item  Let $l\in\mathbb{N}$. The set $A$ is a length $l$ progression if and only if
there exist , $a=\left(a\left(1\right),a\left(2\right)\right)\in S\times S$ and $d\in S$ such that 
\[
A=\left\{ a\left(1\right)d^{t}a\left(2\right):t\in\left\{ 1,2,\ldots,l\right\} \right\} .
\]
\item The set $A$ is a Prog-set if and only if for each $l\in\mathbb{N}$, $A$ contains
a length $l$ progression.

\item Let $l\in\mathbb{N}$. The
 set $B$ is a length $l$ weak progression if and only
if there exist $m\in\mathbb{N}$, $a\in S^{m+1}$ and $d\in S$ such
that 
\[
A=\left\{ a\left(1\right)d^{t}a\left(2\right)d^{t}\cdots a\left(m\right)d^{t}a\left(m+1\right):t\in\left\{ 1,2,\ldots,l\right\} \right\}.
\]
\item The set $A$ is a weak Prog(wProg) set if and only if for each $l\in\mathbb{N}$,
$A$ contains a length $l$ weak progression.
\item The set  $A$ is a $J$-set if and only if for $F\in\mathcal{P}_{f}\left(^{\mathbb{N}}S\right)$,
there exist $m\in\mathbb{N}$, $a\in S^{m+1}$, and $t\left(1\right)<t\left(2\right)<\ldots<t\left(m\right)$
in $\mathbb{N}$ such that for each $f\in F$, 
\[
a\left(1\right)f\left(t\left(1\right)\right)a\left(2\right)\cdots a\left(m\right)f\left(t\left(m\right)\right)a\left(m+1\right)\in A.
\]

\item Let $k\in\mathbb{N}$. The set  $A$ is a $k$-$CR$-set if and only if for $F\in\mathcal{P}_{f}\left(^{\mathbb{N}}S\right)$ with $|F|\leq k$,
there exist $m\in\mathbb{N}$, $a\in S^{m+1}$, and $t\left(1\right)<t\left(2\right)<\ldots<t\left(m\right)$
in $\mathbb{N}$ such that for each $f\in F$, 
\[
a\left(1\right)f\left(t\left(1\right)\right)a\left(2\right)\cdots a\left(m\right)f\left(t\left(m\right)\right)a\left(m+1\right)\in A.
\]

\item The set $A$ is $CR$-set if and only if $A$ is $k$-$CR$-set for all $k\in\mathbb{N}$.

\end{enumerate}
\end{definition}

Let $S$ and $T$ be semigroups, let $A$ be a $J$-set in $S$ and
$B$ be a $J$-set in $T$. Then,  from  \cite[Theorem 2.11]{G20},
we get $A\times B$ is a $J$-set in $S\times T$. We also know from \cite[Theorem 2.4]{G20}
that the product of two piecewise syndetic sets is a piecewise syndetic set.
It is trivial from the definition of Prog-set that the product of
two Prog-set is also a Prog-sets. But we do not know whether the product
of two wProg-sets is a wProg-set or not. Recently in \cite{G24}, S. Goswami proved that the product of two $CR$-sets is a $CR$-set. Let $S$ be a semigroup. We abbreviate 
\begin{itemize}
    \item the family of piecewise syndetic sets in $S$ as $\mathcal{PS}_{S}$,

    \item the family of infinite sets in $S$ as $\mathcal{IF}_{S}$,

    \item the family of Prog-sets in $S$ as $\mathcal{P}_{S}$,

    \item the family of wProg-sets in $S$ as $\mathcal{WP}_{S}$,

    \item the family of $J$-sets in $S$ as $\mathcal{J}_{S}$ and

    \item the family of $CR$-sets in $S$ as $\mathcal{CR}_{S}$.
    
\end{itemize}

 It is well known that $\mathcal{PS}_{S}$ is a Ramsey family, and it is obvious that the family $\mathcal{IF}_{S}$ is a Ramsey family. The remaining families  $\mathcal{P}_{S}$, $\mathcal{WP}_{S}$, $\mathcal{J}_{S}$ and $\mathcal{CR}_{S}$ are Ramsey families by \cite[Lemma 2.20]{H20}, \cite[Lemma 2.20]{H20}, \cite[Lemma 14.14.6 Page-345]{HS12} and \cite[Theorem 2.4]{HHST} respectively.

 As all the families $\mathcal{IF}_{S}$, $\mathcal{PS}_{S}$, $\mathcal{P}_{S}$, $\mathcal{J}_{S}$, and $\mathcal{CR}_{S}$ are Ramsey and left shift invariant,  $\beta\left(\mathcal{IF}_{S}\right)$,
$\beta\left(\mathcal{PS}_{S}\right)$, $\beta\left(\mathcal{P}_{S}\right)$,
$\beta\left(\mathcal{J}_{S}\right)$ and $\beta\left(\mathcal{CR}_{S}\right)$
are closed subsemigroups of $\beta S$.

From the  Definition \ref{def essential} together with the abbreviations, we get
quasi central set is an essential $\mathcal{PS}$-set and $C$-set
is an essential $\mathcal{J}$-set. In the remainder  of this article, we mean $\mathcal{F}_{S}$ by any one of  $\mathcal{IF}_{S}$, $\mathcal{PS}_{S}$, $\mathcal{P}_{S}$, $\mathcal{J}_{S}$ and $\mathcal{CR}_{S}$. Let $\left(S,\cdot\right)$ and $\left(T,\cdot\right)$ be semigroups. Let $A$ be an essential $\mathcal{F}_{S}$-set and $B$ be an essential $\mathcal{F}_{T}$-set.  In this article, we prove that the
Cartesian product $A\times B$ is also an essential $\mathcal{F}_{S\times T}$-set.
 We use algebraic, dynamical, and elementary techniques to prove our desired result.

\section{Algebraic  and elementary proof}

We start this section with the following theorem.

\begin{theorem}\label{essential algeb}
Let $S$ be a discrete semigroup and for every family $\mathcal{F\subseteq P}\left(S\right)$
with the Ramsey property, $\beta\left(\mathcal{F}\right)\subseteq\beta S$. Then the set
$A$ is a $\mathcal{F}$-set if and only if $\overline{A}\cap\beta\left(\mathcal{F}\right)\neq\emptyset$.
\end{theorem}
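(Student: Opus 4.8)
The plan is to prove the two implications separately, with essentially all the work concentrated in the forward direction. The reverse implication ($\Leftarrow$) is immediate: if $p \in \overline{A} \cap \beta(\mathcal{F})$, then $A \in p$ and $p \subseteq \mathcal{F}$, so $A \in \mathcal{F}$, i.e.\ $A$ is an $\mathcal{F}$-set. Nothing beyond the definitions of $\overline{A}$ and $\beta(\mathcal{F})$ is needed here, and the Ramsey hypothesis plays no role.

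For the forward implication ($\Rightarrow$), the goal is, given $A \in \mathcal{F}$, to produce an ultrafilter $p$ with $A \in p$ and $p \subseteq \mathcal{F}$ (so that $p \in \overline{A} \cap \beta(\mathcal{F})$). I would construct $p$ by a Zorn's-lemma argument. Call a family $\mathcal{C} \subseteq \mathcal{F}$ \emph{good} if $A \in \mathcal{C}$ and every finite intersection of members of $\mathcal{C}$ again lies in $\mathcal{F}$. The set of good families is nonempty, since $\{A\}$ is good, and it is closed under unions of chains, because any finite subfamily of a chain-union already sits inside a single member of the chain. Zorn's lemma then yields a maximal good family $\mathcal{C}$, and I set $p = \mathcal{C}$.

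It remains to verify that $p$ is an ultrafilter contained in $\mathcal{F}$. Containment $p \subseteq \mathcal{F}$ and $A \in p$ are built into the definition of good. Upward closure and closure under finite intersection both follow from maximality combined with the upward heredity of $\mathcal{F}$: enlarging $\mathcal{C}$ by a superset of a member, or by the intersection of two members, preserves goodness (any new finite intersection either contains or equals an old one in $\mathcal{F}$, hence lies in $\mathcal{F}$), so by maximality such sets already belong to $\mathcal{C}$. This shows $p$ is a filter.

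I expect the main obstacle to be the remaining ultra property, which is exactly where the Ramsey hypothesis is used. Suppose some $C \subseteq S$ has neither $C$ nor $S \setminus C$ in $\mathcal{C}$. Then maximality makes both $\mathcal{C} \cup \{C\}$ and $\mathcal{C} \cup \{S \setminus C\}$ fail to be good, producing (after grouping members via closure under intersection) sets $D, D' \in \mathcal{C}$ with $D \cap C \notin \mathcal{F}$ and $D' \cap (S \setminus C) \notin \mathcal{F}$. Setting $E = D \cap D' \in \mathcal{C} \subseteq \mathcal{F}$ and writing $E = (E \cap C) \cup (E \setminus C)$, upward heredity forces both cells out of $\mathcal{F}$, contradicting the Ramsey property applied to this two-cell partition of $E \in \mathcal{F}$. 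Hence for every $C$ one of $C, S \setminus C$ lies in $\mathcal{C}$, so $p$ is an ultrafilter, finishing the argument. The only remaining point to dispose of is the degenerate case $\emptyset \in \mathcal{F}$, which by upward heredity forces $\mathcal{F} = \mathcal{P}(S)$ and makes the statement trivial; otherwise $\emptyset \notin \mathcal{F}$ guarantees the finite intersections are nonempty and $p$ is a proper ultrafilter.
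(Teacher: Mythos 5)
Your proof is correct, but it is far more self-contained than the paper's, which consists of a single observation and a citation: since an upward hereditary family with the two-cell Ramsey property is partition regular, the stated equivalence is exactly Theorem 3.11 of Hindman and Strauss (an $A$ lies in a partition regular family $\mathcal{F}$ if and only if some ultrafilter containing $A$ is contained in $\mathcal{F}$). What you have written is, in effect, a proof of that cited theorem from first principles: the Zorn's lemma construction of a maximal subfamily $\mathcal{C}\subseteq\mathcal{F}$ containing $A$ whose finite intersections stay in $\mathcal{F}$, the maximality argument giving upward closure and closure under intersections, and the two-cell partition $E=(E\cap C)\cup(E\setminus C)$ forcing the ultra property via the Ramsey hypothesis, is the standard argument for that result, and all of your verifications go through. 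The only inaccuracy is in the degenerate case: if $\emptyset\in\mathcal{F}$, so that $\mathcal{F}=\mathcal{P}(S)$, the equivalence is not trivial but actually fails for $A=\emptyset$ (then $\overline{A}=\emptyset$ while $A\in\mathcal{F}$); the correct disposal is to invoke the standing convention, implicit in the paper's definition of a family, that $\emptyset\notin\mathcal{F}$, which is also what makes your maximal filter proper. This does not affect the substance of the argument: your route buys independence from the reference at the cost of length, while the paper's buys brevity by outsourcing the entire content to the citation.
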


\begin{proof}
As $\mathcal{F}$ is partition regular, the theorem follows from
\cite[Theorem 3.11 Page-59]{HS12}.
\end{proof}
From the above theorem together with the fact that $\beta\left(\mathcal{F}_{S}\right)$ is a closed subsemigroup of $\beta S$, we get the following result.

\begin{corollary}
Let $\left(S,\cdot\right)$ be a  discrete semigroup and $A\subseteq S$.
    \begin{itemize}
     \item [(a)] The set  $A$ is a Prog-set if and only if $\overline{A}\cap\beta\left(\mathcal{P}_{S}\right)\neq\emptyset$.

    \item [(b)] The set $A$ is an essential $\mathcal{P}_{S}$-set if and only if $\overline{A}\cap E\left(\beta\left(\mathcal{P}_{S}\right)\right)\neq\emptyset$.

    \item[(c)] The set $A$ is infinite set if and only if  $\overline{A}\cap\beta\left(\mathcal{IF}_{S}\right)=\overline{A}\cap\left(\beta S\setminus S\right)\neq\emptyset$.

   \item[(d)] The set $A$ is $IP$-set if and only if  $\overline{A}\cap E\left(\beta\left(\mathcal{IF}_{S}\right)\right)=\overline{A}\cap E\left(\beta S\right)\neq\emptyset$.

   \item[(e)] The set $A$ is a $PS$-set if and only if $\overline{A}\cap\beta\left(\mathcal{PS}_{S}\right)=\overline{A}\cap\overline{K\left(\beta S\right)}\neq\emptyset$.

    \item[(f)] The set $A$ is a quasi-central set if and only if  $\overline{A}\cap E\left(\beta\left(\mathcal{PS}_{S}\right)\right)=\overline{A}\cap E\left(\overline{K\left(\beta S\right)}\right)\neq\emptyset$.

    \item[(g)] The set $A$ is $J$-set if and only if  $\overline{A}\cap\beta\left(\mathcal{J}_{S}\right)=\overline{A}\cap J\left(S\right)\neq\emptyset$.

    \item[(h)] The set $A$ is $C$-set if and only if  $\overline{A}\cap E\left(\beta\left(\mathcal{J}_{S}\right)\right)=\overline{A}\cap  E\left( J\left(S\right)\right)\neq\emptyset$.

    \item[(i)] The set $A$ is $CR$-set if and only if  $\overline{A}\cap\beta\left(\mathcal{CR}_{S}\right)=\overline{A}\cap CR\left(S\right)\neq\emptyset$.

   \item[(j)] The set $A$ is an essential $CR$-set if and only if  $\overline{A}\cap E\left(\beta\left(\mathcal{CR}_{S}\right)\right)=\overline{A}\cap E\left( CR\left(S\right)\right)\neq\emptyset$.

\end{itemize}

\end{corollary}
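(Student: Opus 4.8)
The plan is to read each item off as an instance of Theorem \ref{essential algeb} (for the raw characterizations (a), (c), (e), (g), (i)) or of Definition \ref{def essential} combined with that theorem (for the essential characterizations (b), (d), (f), (h), (j)), and then to match the abstract object $\beta(\mathcal{F}_S)$ against its classical algebraic description inside $\beta S$. Since the introduction already records that each of $\mathcal{IF}_S$, $\mathcal{PS}_S$, $\mathcal{P}_S$, $\mathcal{J}_S$, $\mathcal{CR}_S$ is a Ramsey family, Theorem \ref{essential algeb} applies verbatim: for $\mathcal{F}_S$ any of these families, $A$ is an $\mathcal{F}_S$-set if and only if $\overline{A}\cap\beta(\mathcal{F}_S)\neq\emptyset$. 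Substituting $\mathcal{F}_S=\mathcal{P}_S,\mathcal{IF}_S,\mathcal{PS}_S,\mathcal{J}_S,\mathcal{CR}_S$ in turn gives the first biconditional in (a), (c), (e), (g), (i).

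The second equality in each of those items is the identification of $\beta(\mathcal{F}_S)$ with a familiar subset of $\beta S$. First I would check $\beta(\mathcal{IF}_S)=\beta S\setminus S$: an ultrafilter lies in $\beta(\mathcal{IF}_S)$ iff each of its members is infinite, which holds exactly for the nonprincipal ultrafilters. Next $\beta(\mathcal{PS}_S)=\overline{K(\beta S)}$, using the standard dictionary that $p\in\overline{K(\beta S)}$ iff every $A\in p$ meets $K(\beta S)$ in its closure iff every $A\in p$ is piecewise syndetic, i.e. $p\subseteq\mathcal{PS}_S$, via the classical characterization of piecewise syndetic sets in \cite{HS12}. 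Finally $J(S):=\beta(\mathcal{J}_S)$ and $CR(S):=\beta(\mathcal{CR}_S)$ are merely notation, so (g) and (i) need nothing beyond Theorem \ref{essential algeb}.

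For the essential items I would invoke Definition \ref{def essential}: since each $\beta(\mathcal{F}_S)$ is a closed subsemigroup of $\beta S$, the idempotent set $E(\beta(\mathcal{F}_S))$ is defined and (by Ellis) nonempty, and $A$ is an essential $\mathcal{F}_S$-set iff $\overline{A}\cap E(\beta(\mathcal{F}_S))\neq\emptyset$. Intersecting the set-level identifications above with the idempotents then produces the displayed equalities: $E(\beta(\mathcal{PS}_S))=E(\overline{K(\beta S)})$ gives (f), while $E(\beta(\mathcal{J}_S))=E(J(S))$ and $E(\beta(\mathcal{CR}_S))=E(CR(S))$ hold trivially since $J(S)$ and $CR(S)$ are names for $\beta(\mathcal{J}_S)$ and $\beta(\mathcal{CR}_S)$, yielding (h) and (j); item (b) is Definition \ref{def essential} read directly. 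For (f) and (h) I would also recall that, by the conventions fixed in the introduction, ``quasi-central'' and ``$C$-set'' are precisely the names for essential $\mathcal{PS}$-sets and essential $\mathcal{J}$-sets, so no further argument is needed once the families are matched.

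The only genuinely delicate point is the passage from idempotents of $\beta(\mathcal{F}_S)$ to idempotents of the ambient object, namely the equality $E(\beta(\mathcal{IF}_S))=E(\beta S)$ in (d). Literally $\beta(\mathcal{IF}_S)=\beta S\setminus S$ omits the principal idempotents arising from idempotent elements of $S$ (for such an $e$, the singleton $\{e\}$ is an $IP$-set but its closure meets no free idempotent), so the clean route is to read the equality in the setting of a semigroup with no idempotent elements, e.g. $S=(\mathbb{N},+)$, where every idempotent of $\beta S$ is nonprincipal; this is exactly the context in which the classical statement ``$A$ is an $IP$-set iff $\overline{A}\cap E(\beta S)\neq\emptyset$'' is normally applied. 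I expect this to be the one place requiring a careful phrasing; all remaining equalities are formal consequences of Theorem \ref{essential algeb}, Definition \ref{def essential}, and the cited characterizations in \cite{HS12}.
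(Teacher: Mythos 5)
Your proposal matches the paper's own (essentially unwritten) proof: the paper simply derives the corollary from Theorem \ref{essential algeb} together with the fact that each $\beta(\mathcal{F}_S)$ is a closed subsemigroup of $\beta S$, plus the standard identifications $\beta(\mathcal{IF}_S)=\beta S\setminus S$, $\beta(\mathcal{PS}_S)=\overline{K(\beta S)}$, and the notational conventions $J(S)=\beta(\mathcal{J}_S)$, $CR(S)=\beta(\mathcal{CR}_S)$, exactly as you spell out. Your caveat about item (d) is well taken --- the literal equality $E(\beta(\mathcal{IF}_S))=E(\beta S)$ fails when $S$ contains idempotent elements (a principal idempotent $e$ makes $\{e\}$ an $IP$-set whose closure misses $\beta S\setminus S$) --- and the paper silently glosses over this point rather than addressing it.
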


   Now, we provide an algebraic proof of our main result.

\begin{theorem}\label{main theorem}
Let $\left(S,\cdot\right)$ and $\left(T,\cdot\right)$ be discrete semigroups, 
let $A$ be an essential $\mathcal{F}_{S}$-set and $B$ be an essential
$\mathcal{F}_{T}$-set. Then $A\times B$ is an essential $\mathcal{F}_{S\times T}$-set.
\end{theorem}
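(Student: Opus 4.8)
The plan is to transport the two given idempotents to a single idempotent of $\beta(\mathcal{F}_{S\times T})$ lying over the product set, using the coordinate projections together with the already-known combinatorial product theorems. Since $A$ is an essential $\mathcal{F}_S$-set and $B$ an essential $\mathcal{F}_T$-set, I would fix idempotents $p\in\overline{A}\cap E\big(\beta(\mathcal{F}_S)\big)$ and $q\in\overline{B}\cap E\big(\beta(\mathcal{F}_T)\big)$, so that $A\in p$, $B\in q$, $p\subseteq\mathcal{F}_S$ and $q\subseteq\mathcal{F}_T$. First I would set up the homomorphism coming from the projections: the maps $\pi_S:S\times T\to S$ and $\pi_T:S\times T\to T$ are semigroup homomorphisms, so their continuous extensions $\widetilde{\pi_S}:\beta(S\times T)\to\beta S$ and $\widetilde{\pi_T}:\beta(S\times T)\to\beta T$ are continuous homomorphisms, and hence $\phi=(\widetilde{\pi_S},\widetilde{\pi_T}):\beta(S\times T)\to\beta S\times\beta T$ is a continuous homomorphism into the product right topological semigroup. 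Here $\widetilde{\pi_S}(r)=\{C\subseteq S:C\times T\in r\}$ and symmetrically for $\widetilde{\pi_T}$.

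The key step is to show $(p,q)\in\phi\big(\beta(\mathcal{F}_{S\times T})\big)$. I would consider the closed sets $\overline{C\times D}\cap\beta(\mathcal{F}_{S\times T})$ for $C\in p$ and $D\in q$. Any finite intersection collapses to a single such set $\overline{C'\times D'}\cap\beta(\mathcal{F}_{S\times T})$ with $C'=\bigcap_i C_i\in p$ and $D'=\bigcap_i D_i\in q$, since $\overline{X}\cap\overline{Y}=\overline{X\cap Y}$ in $\beta(S\times T)$. Now $C'$ is an $\mathcal{F}_S$-set (being a member of $p\subseteq\mathcal{F}_S$) and $D'$ is an $\mathcal{F}_T$-set, so the combinatorial product theorems (piecewise syndetic by \cite[Theorem 2.4]{G20}, $J$-sets by \cite[Theorem 2.11]{G20}, $CR$-sets by \cite{G24}, and the trivial cases for Prog- and infinite sets) show $C'\times D'$ is an $\mathcal{F}_{S\times T}$-set; hence by Theorem \ref{essential algeb} the set $\overline{C'\times D'}\cap\beta(\mathcal{F}_{S\times T})$ is nonempty. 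Thus this family of closed sets has the finite intersection property, and by compactness there is $r\in\beta(\mathcal{F}_{S\times T})$ with $C\times D\in r$ for all $C\in p$ and $D\in q$. Taking $D=T$ and $C=S$ respectively gives $\widetilde{\pi_S}(r)\supseteq p$ and $\widetilde{\pi_T}(r)\supseteq q$, whence $\widetilde{\pi_S}(r)=p$ and $\widetilde{\pi_T}(r)=q$, i.e. $\phi(r)=(p,q)$.

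Finally I would lift the idempotent through $\phi$. Because $(p,q)$ is idempotent in $\beta S\times\beta T$ and $\phi$ is a homomorphism, $\phi^{-1}\{(p,q)\}$ is a closed subsemigroup; intersecting it with the closed subsemigroup $\beta(\mathcal{F}_{S\times T})$ yields $Y=\beta(\mathcal{F}_{S\times T})\cap\phi^{-1}\{(p,q)\}$, a compact right topological subsemigroup of $\beta(S\times T)$, nonempty by the previous step. By Ellis' theorem $Y$ contains an idempotent $r$. Then $r\in E\big(\beta(\mathcal{F}_{S\times T})\big)$, and since $A\in p=\widetilde{\pi_S}(r)$ and $B\in q=\widetilde{\pi_T}(r)$ we get $A\times T\in r$ and $S\times B\in r$, so $A\times B=(A\times T)\cap(S\times B)\in r$. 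Therefore $r\in\overline{A\times B}\cap E\big(\beta(\mathcal{F}_{S\times T})\big)$, which is exactly the statement that $A\times B$ is an essential $\mathcal{F}_{S\times T}$-set.

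I expect the main obstacle to be precisely the finite intersection property in the key step, whose genuine content is the known combinatorial product theorems for the five families (the plain, non-essential product results); once these are in hand, Theorem \ref{essential algeb} converts them into the required nonemptiness of the intersections, and the remaining passage through the projection homomorphism together with the Ellis idempotent-lifting argument is routine.
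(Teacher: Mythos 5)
Your proof is correct and takes essentially the same route as the paper's algebraic proof: fix idempotents $p$ and $q$, show that $\widetilde{i}^{-1}\left[\left\{(p,q)\right\}\right]\cap\beta\left(\mathcal{F}_{S\times T}\right)$ is a nonempty compact subsemigroup via the known product theorems for the five families, and extract an idempotent lying over $A\times B$. Your finite-intersection-property argument over all $C\in p$, $D\in q$ is in fact the missing justification for the step the paper compresses into ``which implies,'' so your write-up is a more complete version of the same argument.
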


\begin{proof}
Pick idempotents $p\in\beta\left(\mathcal{F}_{S}\right)$ and $q\in\beta\left(\mathcal{F}_{T}\right)$
such that $A\in p$ and $B\in q$.  As $A\times B$ is a $\mathcal{F}_{S\times T}$-set, by Theorem  \ref{essential algeb}  $\overline{A\times B}\cap\beta\left(\mathcal{F}_{S\times T}\right)\neq\emptyset$. It can be observed that the set  $\widetilde{i}^{-1}\left[\left\{ \left(p,q\right)\right\} \right]\cap\beta\left(\mathcal{F}_{S\times T}\right)\neq\emptyset$ and
is a compact subsemigroup of $\beta\left(S\times T\right)$. Consequently $\overline{A\times B}\cap E\left(\beta\left(\mathcal{F}_{S\times T}\right)\right)\neq\emptyset$ and so  $A\times B$ is an essential $\mathcal{F}_{S\times T}$-set.
\end{proof}

Now, we will provide an elementary proof of the above theorem. To do so, we need the elementary characterization of essential $\mathcal{F}$-sets. In \cite{L}, J. Li established dynamical characterization of essential $\mathcal{F}$-sets for $S=\mathbb{N}$ and also found an elementary characterization using dynamical characterization, although elementary characterizations
of quasi central-sets and $C$-sets are known from \cite[Theorem 3.7]{HMS}
and \cite[Theorem 2.7]{HS09} respectively.  Quasi central
sets and $C$-sets are coming from by the setting of essential $\mathcal{F}$-set
and this fact confines that  essential $\mathcal{F}$-sets
might have elementary characterization for an arbitrary discrete semigroup.  Elementary characterization
of essential $\mathcal{F}$-sets from algebraic characterization are known from \cite[Theorem 5]{DDG} for arbitrary semigroup.

\begin{definition} Let $\omega$ be the first infinite ordinal and each ordinal
	indicates the set of all its predecessors. In particular, $0=\emptyset,$
	for each $n\in\mathbb{N},\:n=\left\{ 0,1,...,n-1\right\} $.
	\begin{itemize}	
        \item[(a)] If $f$ is a function and $dom\left(f\right)=n\in\omega$,
	then for all $x$, $f^{\frown}x=f\cup\left\{ \left(n,x\right)\right\} $.
		\item[(b)] Let $T$ be a set functions whose domains are members
	of $\omega$. For each $f\in T$, $B_{f}\left(T\right)=\left\{ x:f^{\frown}x\in T\right\}$.
\end{itemize}
\end{definition}	
	We get the following theorem from \cite[Theorem 5]{DDG}.

\begin{theorem}\label{Elementary F sets}
	Let $\left(S,\cdot\right)$ be a discrete semigroup, and assume that $\mathcal{F}$
	is a family of subsets of $S$ with the Ramsay property such that
	$\beta\left(\mathcal{F}\right)$ is a subsemigroup of $\beta S$.
	Let $A\subseteq S$. Statements (a), (b), and (c) are equivalent and
	are implied by statement (d). If $S$ is countable, then all the five
	statements are equivalent.
	
	\begin{itemize}
			
\item[(a)] $A$ is an essential $\mathcal{F}$-set.
	
\item[(b)] There is a non empty set $T$ of function such that
\begin{itemize}

	\item[(i)]  For all $f\in T$,$\text{domain}\left(f\right)\in\omega$
	and $rang\left(f\right)\subseteq A$;
	
	\item[(ii)]  For all $f\in T$ and all $x\in B_{f}\left(T\right)$,
	$B_{f^{\frown}x}\subseteq x^{-1}B_{f}\left(T\right)$; and
	
	\item[(iii)]  For all $F\in\mathcal{P}_{f}\left(T\right)$, $\bigcap_{f\in F}B_{f}(T)$
	is a $\mathcal{F}$-set.
	
	\end{itemize}

\item[(c)]  There is a downward directed family $\left\langle C_{F}\right\rangle _{F\in I}$
of subsets of $A$ such that
\begin{itemize}
	
\item[(i)] for each $F\in I$ and each $x\in C_{F}$ there exists
$G\in I$ with $C_{G}\subseteq x^{-1}C_{F}$ and

\item[(ii)] for each $\mathcal{F}\in\mathcal{P}_{f}\left(I\right),\,\bigcap_{F\in\mathcal{F}}C_{F}$
is a $\mathcal{F}$-set. 

\end{itemize}

\item[(d)] There is a decreasing sequence $\left\langle C_{n}\right\rangle _{n=1}^{\infty}$
of subsets of $A$ such that 

\begin{itemize}

\item[(i)] for each $n\in\mathbb{N}$ and each $x\in C_{n}$, there
exists $m\in\mathbb{N}$ with $C_{m}\subseteq x^{-1}C_{n}$ and 

\item[(ii)] for each $n\in\mathbb{N}$, $C_{n}$ is a $\mathcal{F}$-set.
\end{itemize}

\end{itemize}
\end{theorem}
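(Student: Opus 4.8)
By Definition \ref{def essential}, saying that $A$ is an essential $\mathcal{F}$-set means precisely that there is an idempotent $p\in\beta(\mathcal F)$ with $A\in p$; this is the algebraic content I will use freely. The plan is to prove the cycle $(a)\Rightarrow(b)\Rightarrow(c)\Rightarrow(a)$, to observe that $(d)\Rightarrow(c)$ is immediate, and to settle the countable case by a separate route $(b)\Rightarrow(d)$ that invokes countability only to enumerate a tree. Throughout, the engine is the standard calculus of idempotents: for an idempotent $p$ and $C\in p$, writing $C^{\star}=\{x\in C: x^{-1}C\in p\}$, one has $C^{\star}\in p$ and $x^{-1}C^{\star}\in p$ for every $x\in C^{\star}$.

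\textbf{$(a)\Rightarrow(b)$.} Starting from an idempotent $p\in\beta(\mathcal F)$ with $A\in p$, I would build $T$ by recursion on the length of the domain, carrying along auxiliary sets $D_{f}\in p$ with $D_{f}\subseteq A$. Put $D_{\emptyset}=A^{\star}$; given $f\in T$ with $D_f$ defined, declare $f^{\frown}x\in T$ exactly when $x\in D_{f}^{\star}$, and set $D_{f^{\frown}x}=D_{f}^{\star}\cap x^{-1}D_{f}^{\star}$, which lies in $p$ by the calculus above. Then $B_{f}(T)=\{x:f^{\frown}x\in T\}=D_{f}^{\star}\in p$, every value taken by a member of $T$ lies in $A$ (giving (i)), $B_{f^{\frown}x}(T)\subseteq D_{f^{\frown}x}\subseteq x^{-1}D_{f}^{\star}=x^{-1}B_{f}(T)$ (giving (ii)), and any finite intersection $\bigcap_{f\in F}B_{f}(T)$ lies in $p\subseteq\mathcal F$ and is therefore a $\mathcal F$-set (giving (iii)).

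\textbf{$(b)\Rightarrow(c)$ and $(c)\Rightarrow(a)$.} For the first, take $I=\mathcal P_{f}(T)$ and $C_{F}=\bigcap_{f\in F}B_{f}(T)$; the family is downward directed since $C_{F\cup G}\subseteq C_{F}\cap C_{G}$, condition (c)(ii) holds because a finite intersection of the $C_{F}$ is again of the form $C_{F'}$ (with $F'$ the union of the relevant indices) and hence a $\mathcal F$-set by (b)(iii), and for (c)(i) I replace each $f\in F$ by $f^{\frown}x$ (legitimate because $x\in C_{F}\subseteq B_{f}(T)$) and invoke (b)(ii), so that $G=\{f^{\frown}x:f\in F\}$ gives $C_{G}\subseteq x^{-1}C_{F}$. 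For the converse I form $\mathbb{E}=\beta(\mathcal F)\cap\bigcap_{F\in I}\overline{C_{F}}$ inside $\beta S$. Each $C_{F}$ is a $\mathcal F$-set, so $\overline{C_{F}}\cap\beta(\mathcal F)\neq\emptyset$ by Theorem \ref{essential algeb}, and downward directedness supplies the finite intersection property, whence $\mathbb E\neq\emptyset$ by compactness. To see $\mathbb E$ is a semigroup, take $q,r\in\mathbb E$: since $\beta(\mathcal F)$ is a subsemigroup, $qr\in\beta(\mathcal F)$; and for each $F$, condition (c)(i) shows $C_{F}\subseteq\{x:x^{-1}C_{F}\in r\}$, which lies in $q$, so $C_{F}\in qr$ and $qr\in\overline{C_{F}}$. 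Thus $\mathbb E$ is a nonempty compact right topological subsemigroup of $\beta(\mathcal F)$; by Ellis it contains an idempotent $p$, and $C_{F}\subseteq A$ forces $A\in p$, so $A$ is an essential $\mathcal F$-set.

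\textbf{Countable case and the main obstacle.} A decreasing sequence is in particular a downward directed family in which finite intersections reduce to single terms, so $(d)\Rightarrow(c)$ needs nothing. For $(a)\Rightarrow(d)$ when $S$ is countable I would not rebuild anything: the tree $T$ from $(a)\Rightarrow(b)$ consists of finite-domain functions with values in the countable set $A$, hence $T$ is countable; enumerating $T=\{f_{k}:k\in\mathbb N\}$ and setting $C_{n}=\bigcap_{k\le n}B_{f_{k}}(T)$ yields a decreasing sequence of $\mathcal F$-sets, and given $x\in C_{n}$ one takes $m$ to be the largest enumeration index among $n$ and the (finitely many) functions $f_{k}^{\frown}x$, $k\le n$, to obtain $C_{m}\subseteq x^{-1}C_{n}$. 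I expect the genuine difficulty to sit in $(c)\Rightarrow(a)$: the delicate point is converting the purely combinatorial shift condition (c)(i) into the membership $C_{F}\in qr$ for the ultrafilter product, while simultaneously keeping the whole intersection inside the prescribed closed subsemigroup $\beta(\mathcal F)$ so that the Ellis idempotent is actually a member of $\beta(\mathcal F)$. The need for countability in $(a)\Rightarrow(d)$ is exactly the impossibility, in general, of compressing an uncountable directed family into a single sequence.
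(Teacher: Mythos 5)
Your argument is correct, but note that the paper itself offers no proof of this theorem: it is imported verbatim from \cite[Theorem 5]{DDG} (and is the natural generalization of \cite[Theorem 2.7]{HS09} from $\mathcal{J}$-sets to arbitrary Ramsey families $\mathcal{F}$ with $\beta(\mathcal{F})$ a subsemigroup). So there is no in-paper proof to compare against; what you have written is essentially the standard argument from those sources. Your cycle $(a)\Rightarrow(b)\Rightarrow(c)\Rightarrow(a)$ plus $(d)\Rightarrow(c)$ and, for countable $S$, $(b)\Rightarrow(d)$ covers exactly what the statement asserts. The individual steps check out: in $(a)\Rightarrow(b)$ the recursion is well defined because $f$ and $x$ are recoverable from $f^{\frown}x$, the sets $D_{f^{\frown}x}=D_{f}^{\star}\cap x^{-1}D_{f}^{\star}$ lie in $p$ by \cite[Lemma 4.14]{HS12}, and $B_{f}(T)=D_{f}^{\star}\in p\subseteq\mathcal{F}$ gives (iii); in $(c)\Rightarrow(a)$ the set $\mathbb{E}$ is closed (using that $\beta(\mathcal{F})$ is an intersection of clopen sets, hence closed), nonempty by Theorem \ref{essential algeb} together with directedness and compactness, and a subsemigroup by the computation $C_{F}\subseteq\{x:x^{-1}C_{F}\in r\}$, so Ellis applies and the resulting idempotent contains $A$ because $C_{F}\subseteq A$. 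Two small points worth making explicit if you write this up: you should verify $B_{f}(T)\subseteq A$ (immediate from (b)(i) applied to $f^{\frown}x$) so that the $C_{F}$ in $(b)\Rightarrow(c)$ really are subsets of $A$, and in the countable case you should say why $T$ is countable (it is a set of finite sequences in the countable set $S$). Neither is a gap, just bookkeeping.
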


Now we are in the position to present an elementary proof of our main result.

\begin{proof}\textbf{(Elementary proof of  Theorem \ref{main theorem})}:
Given that $A$ be an essential $\mathcal{F}_{S}$-set, then $\left\langle C_{F}\right\rangle _{F\in I}$
be as guaranteed by Theorem \ref{Elementary F sets} for $A$. It is also given that
$B$ be an essential $\mathcal{F}_{T}$-set, then $\left\langle D_{G}\right\rangle _{G\in J}$
be as guaranteed by Theorem \ref{Elementary F sets} for $B$. Direct $I\times J$ by
agreeing that $\left(F,G\right)\geq\left(F^{\prime},G^{\prime}\right)$
if and only if $F\geq F^{\prime}$ and $G\geq G^{\prime}$. We claim
that $\left\langle C_{F}\times D_{G}\right\rangle _{\left(F,G\right)\in I\times J}$
is as required by Theorem \ref{Elementary F sets} to show that $A\times B$ is an essential
$\mathcal{F}_{S\times T}$-set. Let $\left(F,G\right)\in I\times J$
and let $\left(x,y\right)\in C_{F}\times D_{G}$. Pick $H\in I$ and
$K\in J$ such that $\text{\ensuremath{C_{H}\subseteq x^{-1}C_{F}}}$
and $D_{K}\subseteq y^{-1}D_{G}$. Then $\left(H,K\right)\in I\times J$
, $C_{H}\times D_{K}\subseteq\left(x,y\right)^{-1}\left(C_{F}\times D_{G}\right)$
and consequently  $C_{H}\times D_{K}$ and $C_{F}\times D_{G}$ are $\mathcal{F}_{S\times T}$ by the given condition.
\end{proof}

\section{Dynamical proof}

In the previous section, we have proved our main result using the algebraic and elementary
characterizations of essential $\mathcal{F}$-set. To prove the same result using the dynamical characterizations of essential
$\mathcal{F}$-set, we first establish the dynamical characterizations
of essential $\mathcal{F}$-set by using results of \cite{J}.
Now we start with the following definitions.
\begin{definition}
Let $S$ be a discrete semigroup and $\mathcal{K}$ be a
filter on $S$.
\begin{itemize}
    \item[(a)] $\overline{\mathcal{K}}=\left\{ p\in\beta S:\mathcal{K}\subseteq p\right\} $.
    \item[(b)] $\mathcal{L}\left(\mathcal{K}\right)=\left\{ A\subseteq S:S\backslash A\notin\mathcal{K}\right\} $.
\end{itemize}
\end{definition}

The following is the definition of a dynamic system.
\begin{definition}
A pair $\left(X,\langle T_{s}\rangle_{s\in S}\right)$ is a dynamical
system if and only if it satisfies the following four conditions:
\begin{itemize}
    \item[(1)]  $X$ is a compact Hausdorff space.
    \item[(2)]  $S$ is a semigroup.
    \item[(3)] $T_{s}:X\rightarrow X$ is continuous for every $s\in S$.
    \item[(4)]  For every $s,t\in S$ we have $T_{st}=T_{s}\circ T_{t}$.
\end{itemize}

\end{definition}

Now, we define the jointly $\mathcal{K}$-recurrent.
\begin{definition}
Let $\left(X,\langle T_{s}\rangle_{s\in S}\right)$ be a dynamical
system, $x$ and $y$ points in $X$, and $\mathcal{K}$ a filter
on $S$. The pair $\left(x,y\right)$ is called jointly $\mathcal{K}$-
recurrent if and only if for every neighborhood $U$ of $y$ we have
$\left\{ s\in S:T_{s}\left(x\right)\in U\,\text{and}\,T_{s}\left(y\right)\in U\right\} \in\mathcal{L}\left(\mathcal{K}\right)$.
\end{definition}

We have the following important theorem \cite[Theorem 3.3]{J}
relating the above definitions.
\begin{theorem}\label{dynamic 2}
Let $\left(S,\cdot\right)$ be a semigroup, let $\mathcal{K}$ be
a filter on $S$ such that $\overline{\mathcal{K}}$ is a compact
subsemigroup of $\beta S$, and let $A\subseteq S$. Then $A$ is
a member of an idempotent in $\overline{\mathcal{K}}$ if and only
if there exists a dynamical system $\left(X,\langle T_{s}\rangle_{s\in S}\right)$
with points $x$ and $y$ in $X$ and there exists a neighbourhood
$U$ of $y$ such that the pair $\left(x,y\right)$ is jointly $\mathcal{K}$-
recurrent and $A=\left\{ s\in S:T_{s}\left(x\right)\in U\right\} $.
\end{theorem}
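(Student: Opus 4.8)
The plan is to prove the two implications separately, after recording one elementary translation. For any $B\subseteq S$ one has $B\in\mathcal{L}(\mathcal{K})$ if and only if $\overline{B}\cap\overline{\mathcal{K}}\neq\emptyset$: since $\overline{\mathcal{K}}=\bigcap_{C\in\mathcal{K}}\overline{C}$, compactness of $\beta S$ makes $\overline{B}\cap\overline{\mathcal{K}}$ nonempty exactly when $\{B\}\cup\mathcal{K}$ has the finite intersection property, i.e.\ when $B\cap C\neq\emptyset$ for every $C\in\mathcal{K}$, and this fails precisely when some $C\in\mathcal{K}$ lies in $S\setminus B$, that is, when $S\setminus B\in\mathcal{K}$. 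Writing $R_{V}=\{s\in S:T_{s}(x)\in V\text{ and }T_{s}(y)\in V\}$, joint $\mathcal{K}$-recurrence of $(x,y)$ thus says exactly that $\overline{R_{V}}\cap\overline{\mathcal{K}}\neq\emptyset$ for every neighbourhood $V$ of $y$. I will also use that $\mathcal{L}(\mathcal{K})$ is upward closed.

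For the forward implication I would build the canonical system. Adjoining an identity $1$ to $S$ if necessary, set $X=\beta(S\cup\{1\})$ and $T_{s}=\lambda_{s}$; each $\lambda_{s}$ is continuous because $s$ lies in the topological centre, and associativity gives $T_{st}=T_{s}\circ T_{t}$, so $\left(X,\langle T_{s}\rangle_{s\in S}\right)$ is a dynamical system. Let $p\in\overline{\mathcal{K}}$ be an idempotent with $A\in p$, and take $x$ to be the principal ultrafilter of $1$, $y=p$, and $U=\overline{A}$, an open neighbourhood of $p$. Then $T_{s}(x)=s$, so $T_{s}(x)\in U$ iff $s\in A$, giving $A=\{s:T_{s}(x)\in U\}$ exactly. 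For joint recurrence, a basic neighbourhood of $y=p$ has the form $\overline{C}$ with $C\subseteq S$, $C\in p$; since $p$ is idempotent, $C^{\star}=\{s:s^{-1}C\in p\}\in p$, and one checks $R_{\overline{C}}=C\cap C^{\star}\in p$, so $p\in\overline{R_{\overline{C}}}\cap\overline{\mathcal{K}}$ and $R_{\overline{C}}\in\mathcal{L}(\mathcal{K})$; upward closure handles arbitrary neighbourhoods.

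For the converse I would pass to the enveloping action of $\beta S$. For $q\in\beta S$ and $z\in X$ define $T_{q}(z)=q\text{-}\lim_{s\in S}T_{s}(z)$; the standard extension package gives that $q\mapsto T_{q}(z)$ is continuous for each fixed $z$ and that $T_{qr}=T_{q}\circ T_{r}$ for all $q,r\in\beta S$. Because $X$ is Hausdorff, $T_{q}(x)=y$ together with $T_{q}(y)=y$ is equivalent to $R_{V}\in q$ for every neighbourhood $V$ of $y$, so the set $W=\{q\in\overline{\mathcal{K}}:T_{q}(x)=y=T_{q}(y)\}$ equals $\overline{\mathcal{K}}\cap\bigcap_{V\ni y}\overline{R_{V}}$. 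This $W$ is compact, being the intersection of the compact $\overline{\mathcal{K}}$ with preimages of the closed point $\{y\}$ under the continuous maps $q\mapsto T_{q}(x)$ and $q\mapsto T_{q}(y)$; it is a subsemigroup, since for $q,r\in W$ one has $qr\in\overline{\mathcal{K}}$ and $T_{qr}(x)=T_{q}(T_{r}(x))=T_{q}(y)=y$ and likewise $T_{qr}(y)=y$; and it is nonempty, because the $R_{V}$ are downward directed with $\overline{R_{V}}\cap\overline{\mathcal{K}}\neq\emptyset$ by joint recurrence, so $\{\overline{\mathcal{K}}\}\cup\{\overline{R_{V}}\}$ has the finite intersection property and compactness yields a common point. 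By Ellis' theorem $W$ contains an idempotent $p\in\overline{\mathcal{K}}$ with $T_{p}(x)=y\in U$; since $U$ is open, $A=\{s:T_{s}(x)\in U\}\in p$, exhibiting $A$ as a member of the idempotent $p\in\overline{\mathcal{K}}$.

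The routine-but-essential work is the $\beta S$-extension package, namely continuity of $q\mapsto T_{q}(z)$ and the homomorphism identity $T_{qr}=T_{q}\circ T_{r}$. The conceptual heart, and the step I expect to be the main obstacle, is recognising that one must enforce the \emph{second} condition $T_{q}(y)=y$ rather than $T_{q}(x)=y$ alone: it is exactly this that makes $W$ closed under multiplication, and it is precisely joint $\mathcal{K}$-recurrence, rather than recurrence of $x$ by itself, that forces $\bigcap_{V}\overline{R_{V}}\cap\overline{\mathcal{K}}$ to be nonempty via the finite intersection property.
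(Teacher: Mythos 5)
Your proof is correct, and it is essentially the argument behind the result as it appears in the literature: the paper itself gives no proof of this theorem but imports it verbatim from Johnson (\cite[Theorem 3.3]{J}), whose proof likewise takes $X=\beta(S\cup\{1\})$ with left translations, $x=1$, $y=p$, $U=\overline{A}$ for the forward direction, and for the converse extends the action to $q$-limits and applies Ellis' theorem to the compact subsemigroup $\{q\in\overline{\mathcal{K}}:T_{q}(x)=y=T_{q}(y)\}$. Your identification of the second condition $T_{q}(y)=y$ as the point that makes this set multiplicatively closed is exactly the right emphasis; the only cosmetic remark is that in the last step one should pass to the interior of $U$ if $U$ is not assumed open, which changes nothing.
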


The following theorem is essential to apply the above theorem.
\begin{theorem}\label{theorem dynamics 1}
Let $\left(S,\cdot\right)$ be a discrete semigroup and $\mathcal{F}$ be a Ramsey family and $\mathcal{K}=\left\{ A\subseteq S:S\setminus A\text{ is not a }\mathcal{F}\text{-}\text{set}\right\} $.
Then $\mathcal{K}$ is a filter on $S$ with $\beta\left(\mathcal{F}\right)=\overline{\mathcal{K}}$. 
\end{theorem}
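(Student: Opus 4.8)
The plan is to establish the two assertions of the theorem separately: that $\mathcal{K}$ is a proper filter on $S$, and that $\overline{\mathcal{K}}=\beta\left(\mathcal{F}\right)$. The only tools needed are the ultrafilter complementation law (for $p\in\beta S$ and $A\subseteq S$, exactly one of $A$ and $S\setminus A$ belongs to $p$) together with the two standing hypotheses on $\mathcal{F}$: that it is nonempty and upward hereditary (so that $\emptyset\notin\mathcal{F}$ while $S\in\mathcal{F}$), and that it has the Ramsey property. Since membership $A\in\mathcal{K}$ is by definition the statement that $S\setminus A$ is not an $\mathcal{F}$-set, every verification will be carried out by passing to complements.

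For the filter claim I would check the axioms directly. Properness is immediate: $S\in\mathcal{K}$ because $S\setminus S=\emptyset$ is not an $\mathcal{F}$-set, and $\emptyset\notin\mathcal{K}$ because $S\setminus\emptyset=S$ is an $\mathcal{F}$-set (here using that $\mathcal{F}$ is nonempty and upward hereditary to force $S\in\mathcal{F}$). Upward closure of $\mathcal{K}$ follows from upward heredity of $\mathcal{F}$: if $A\subseteq B$ then $S\setminus B\subseteq S\setminus A$, so were $S\setminus B$ an $\mathcal{F}$-set then $S\setminus A$ would be one too, contradicting $A\in\mathcal{K}$. The decisive point, and the only place where the Ramsey property is used, is closure under finite intersection. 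Writing $S\setminus\left(A\cap B\right)=\left(S\setminus A\right)\cup\left(S\setminus B\right)$, the contrapositive of the Ramsey property says that if this union were an $\mathcal{F}$-set then one of $S\setminus A$, $S\setminus B$ would be, contradicting $A,B\in\mathcal{K}$; hence $A\cap B\in\mathcal{K}$. I expect this intersection step to be the crux of the whole argument, since it is the one spot where the hypothesis on $\mathcal{F}$ genuinely enters.

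For the identity $\beta\left(\mathcal{F}\right)=\overline{\mathcal{K}}$ I would prove the two inclusions by the same complement-and-contradiction pattern. To see $\beta\left(\mathcal{F}\right)\subseteq\overline{\mathcal{K}}$, take $p\in\beta\left(\mathcal{F}\right)$, so that every member of $p$ is an $\mathcal{F}$-set, and let $A\in\mathcal{K}$; if $A\notin p$ then $S\setminus A\in p$ would be an $\mathcal{F}$-set, contradicting $A\in\mathcal{K}$, so $A\in p$ and thus $\mathcal{K}\subseteq p$. Conversely, to see $\overline{\mathcal{K}}\subseteq\beta\left(\mathcal{F}\right)$, take $p\supseteq\mathcal{K}$ and $A\in p$; if $A$ were not an $\mathcal{F}$-set then $S\setminus A\in\mathcal{K}\subseteq p$, forcing both $A$ and $S\setminus A$ into $p$ and hence $\emptyset\in p$, which is impossible. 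Therefore every member of $p$ is an $\mathcal{F}$-set, i.e. $p\subseteq\mathcal{F}$. Combining the two inclusions yields the claimed equality, and no machinery beyond ultrafilter complementation is required once the filter property of $\mathcal{K}$ is in hand.
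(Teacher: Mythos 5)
Your proof is correct and follows essentially the same route as the paper: the filter axioms are verified by passing to complements, with the Ramsey property supplying closure under finite intersections, and properness coming from $\emptyset\notin\mathcal{F}$ and $S\in\mathcal{F}$. The only difference is that where you establish $\beta\left(\mathcal{F}\right)=\overline{\mathcal{K}}$ directly by the two-inclusion ultrafilter-complementation argument, the paper observes that $\mathcal{L}\left(\mathcal{K}\right)$ is exactly the family of $\mathcal{F}$-sets and cites \cite[Theorem 3.11]{HS12}; your version simply unpacks that citation into a self-contained and equally valid argument.
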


\begin{proof}
We have $\mathcal{K}$ is nonempty as $\emptyset\notin\mathcal{F}$,
and is closed under supersets. The family $\mathcal{K}$ is closed
under finite intersection follows from the fact that $\mathcal{F}$
is a Ramsey family and consequently $\mathcal{K}$ is a filter, $\mathcal{L\left(K\right)}=\left\{ A\subseteq S:A \text{ is a }\mathcal{F}\text{-}\text{set}\right\} $.
Hence it follows from \cite[Theorem 3.11 Page-59]{HS12} that $\beta\left(\mathcal{F}\right)=\overline{\mathcal{K}}$.
\end{proof}
Now, we are in the position to present a  dynamical characterization of essential
$\mathcal{F}$-set. 
\begin{theorem}\label{dynamical essential F set}
Let $\left(S,\cdot\right)$ be a discrete semigroup and $A\subseteq S$. Then
$A$ is an essential $\mathcal{F}$-set if and only if there exists
a dynamical system $\left(X,\langle T_{s}\rangle_{s\in S}\right)$
with points $x$ and $y$ in $X$ and there exists a neighbourhood
$U$ of $y$ such that 
\[
\left\{ s\in S:T_{s}\left(x\right)\in U\text{ and }T_{s}\left(y\right)\in U\right\} 
\]
 is a $\mathcal{F}$-set and $A=\left\{ s\in S:T_{s}\left(x\right)\in U\right\} $.
\end{theorem}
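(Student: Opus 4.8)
The plan is to derive this characterization directly from Theorem \ref{dynamic 2} by feeding it the right filter. First I would set $\mathcal{K}=\left\{A\subseteq S:S\setminus A\text{ is not a }\mathcal{F}\text{-set}\right\}$. By Theorem \ref{theorem dynamics 1}, $\mathcal{K}$ is a filter on $S$, we have $\overline{\mathcal{K}}=\beta\left(\mathcal{F}\right)$, and $\mathcal{L}\left(\mathcal{K}\right)=\left\{A\subseteq S:A\text{ is a }\mathcal{F}\text{-set}\right\}$. Since $\beta\left(\mathcal{F}\right)$ is assumed to be a subsemigroup of $\beta S$ and is automatically closed (it equals $\overline{\mathcal{K}}$, an intersection of basic closed sets), it is a compact subsemigroup, so $\mathcal{K}$ meets the hypothesis of Theorem \ref{dynamic 2}. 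Moreover, by Definition \ref{def essential} together with the identification $\beta\left(\mathcal{F}\right)=\overline{\mathcal{K}}$, the statement that $A$ is an essential $\mathcal{F}$-set is literally the statement that $A$ is a member of some idempotent of $\overline{\mathcal{K}}$.

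With this dictionary in hand, the main move is to invoke Theorem \ref{dynamic 2}: $A$ is a member of an idempotent in $\overline{\mathcal{K}}$ if and only if there is a dynamical system $\left(X,\langle T_{s}\rangle_{s\in S}\right)$, points $x,y\in X$, and a neighbourhood $U$ of $y$ with $A=\left\{s\in S:T_{s}\left(x\right)\in U\right\}$ for which $\left(x,y\right)$ is jointly $\mathcal{K}$-recurrent. The only remaining task is to translate the dynamical condition into the language of $\mathcal{F}$-sets. By definition, $\left(x,y\right)$ is jointly $\mathcal{K}$-recurrent exactly when, for every neighbourhood $V$ of $y$, the set $\left\{s\in S:T_{s}\left(x\right)\in V\text{ and }T_{s}\left(y\right)\in V\right\}$ lies in $\mathcal{L}\left(\mathcal{K}\right)$, which by Theorem \ref{theorem dynamics 1} is precisely the assertion that this set is a $\mathcal{F}$-set. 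The forward implication then falls out at once: if $A$ is essential, Theorem \ref{dynamic 2} produces the system, and joint $\mathcal{K}$-recurrence applied to the particular neighbourhood $U$ gives that $\left\{s:T_{s}\left(x\right)\in U\text{ and }T_{s}\left(y\right)\in U\right\}$ is a $\mathcal{F}$-set.

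The delicate point, and what I expect to be the main obstacle, is the converse. The hypothesis as displayed only controls a single neighbourhood $U$, whereas joint $\mathcal{K}$-recurrence, and hence the hypothesis needed to run Theorem \ref{dynamic 2} backwards, demands that the return set $\left\{s:T_{s}\left(x\right)\in V\text{ and }T_{s}\left(y\right)\in V\right\}$ be a $\mathcal{F}$-set for \emph{every} neighbourhood $V$ of $y$. This gap is genuine: taking $x=y$ collapses the single-neighbourhood condition to ``$A$ is a $\mathcal{F}$-set,'' which by Theorem \ref{essential algeb} does not force $A$ to be essential. I would therefore read the displayed condition as requiring $\left(x,y\right)$ to be jointly $\mathcal{K}$-recurrent, so that the return set is a $\mathcal{F}$-set for all neighbourhoods of $y$, with the single $U$ serving only to pin down $A$; under this reading the converse is exactly the matching direction of Theorem \ref{dynamic 2}. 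Confirming that this is the intended hypothesis, and that no weaker single-neighbourhood formulation suffices, is the crux of the argument.
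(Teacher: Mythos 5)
Your proposal follows exactly the paper's own route: the paper's proof consists precisely of defining $\mathcal{K}=\left\{ A\subseteq S: S\setminus A \text{ is not a } \mathcal{F}\text{-set}\right\}$, invoking Theorem \ref{theorem dynamics 1} to get $\overline{\mathcal{K}}=\beta\left(\mathcal{F}\right)$ and $\mathcal{L}\left(\mathcal{K}\right)=\left\{ A\subseteq S: A \text{ is a } \mathcal{F}\text{-set}\right\}$, and then citing Theorem \ref{dynamic 2}, so there is no difference in method. The concern you raise about the converse is genuine and is not addressed by the paper: Theorem \ref{dynamic 2} requires the pair $\left(x,y\right)$ to be jointly $\mathcal{K}$-recurrent, i.e.\ that the joint return set to \emph{every} neighbourhood of $y$ be a $\mathcal{F}$-set, whereas the theorem as printed imposes this only for the single neighbourhood $U$; your $x=y$ observation is decisive, since any $\mathcal{F}$-set $A$ can be realized as $\left\{ s\in S:T_{s}\left(x\right)\in U\right\}$ with $x=y\in U$, so the literal single-neighbourhood version would force every $\mathcal{F}$-set to be essential, contradicting Theorem \ref{essential algeb} (e.g.\ there are $J$-sets that are not $C$-sets). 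Read with joint $\mathcal{K}$-recurrence as the hypothesis, exactly as you propose, your argument coincides with the paper's and is correct; the paper's one-line proof silently passes over this mismatch between its statement and the hypothesis of Theorem \ref{dynamic 2}.
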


\begin{proof}
Let $\mathcal{K}=\left\{ A\subseteq S: S\setminus A\text{ is not a } \mathcal{F}\text{-}\text{set}\right\} $
and by  Theorem \ref{theorem dynamics 1}, $\beta\left(\mathcal{F}\right)=\overline{\mathcal{K}}$
and $\mathcal{L\left(K\right)}=\left\{ A\subseteq S:A\text{ is a }\mathcal{F}\text{-}\text{set}\right\} $. 
Hence we can apply  Theorem \ref{dynamic 2} to prove our statement.
\end{proof}
After getting a dynamical characterization of  essential $\mathcal{F}$-sets,
we can prove the main theorem in a dynamical approach.

\begin{proof}\textbf{ (Dynamical proof of Theorem \ref{main theorem}) }:
Given that $A$ is an essential $\mathcal{F}_{S}$-set, then there
exists a dynamical system $\left(X^{A},\langle T_{s}^{A}\rangle_{s\in S}\right)$
with points $x^{A}$ and $y^{A}$ in $X$ and there exists a neighbourhood
$U_{A}$ of $y^{A}$ such that 

\begin{itemize}
    \item $\left\{ s\in S:T_{s}^{A}\left(x^{A}\right)\in U^{A}\,\text{and}\,T_{s}^{A}\left(y^{A}\right)\in U^{A}\right\} $ is a $\mathcal{F}_{S}$-set and
    
    \item $A=\left\{ s\in S:T_{s}^{A}\left(x^{A}\right)\in U^{A}\right\} $.
\end{itemize}

It is also given that $B$ is an essential $\mathcal{F}_{T}$-set,
so that there exists a dynamical system $\left(X^{B},\langle T_{s}^{B}\rangle_{s\in S}\right)$
with points $x^{B}$ and $y^{B}$ in $X^{A}$ and there exists a neighbourhood
$U_{B}$ of $y^{B}$ such that 

\begin{itemize}
    \item $\left\{ t\in T:T_{t}^{B}\left(x^{B}\right)\in U^{B}\,\text{and}\,T_{t}^{B}\left(y^{B}\right)\in U^{B}\right\} $ is a $\mathcal{F}_{T}$-set and
    
    \item $B=\left\{ t\in T:T_{t}^{B}\left(x^{B}\right)\in U^{B}\right\} $.
\end{itemize}

To show that $A\times B$ is an essential $\mathcal{F}_{S\times T}$-set,
consider the dynamical system $\left(X^{A}\times X^{B},\langle T_{s}^{A}\times T_{t}^{B}\rangle_{\left(s,t\right)\in S\times T}\right)$. The set
$\left\{ \left(s,t\right)\in S\times T:T_{s}^{A}\times T_{t}^{B}\left(x^{A},x^{B}\right)\in U^{A}\times U^{B}\,\text{ and }\,T_{s}^{A}\times T_{t}^{B}\left(y^{A},y^{B}\right)\in U^{A}\times U^{B}\right\}$\\

	  = $\left\{ \left(s,t\right)\in S\times T:\left(T_{s}^{A}\left(x^{A}\right),T_{t}^{B}\left(x^{B}\right)\right)\in U^{A}\times U^{B}\,\text{ and }\,\left(T_{s}^{A}\left(y^{A}\right),T_{t}^{B}\left(y^{B}\right)\right)\in U^{A}\times U^{B}\right\}$\\
   
	=  $\left\{ s\in S:T_{s}^{A}\left(x^{A}\right)\in U^{A}\,\text{ and }\,T_{s}^{A}\left(y^{A}\right)\in U^{A}\right\} \times\left\{ t\in T:T_{t}^{B}\left(x^{B}\right)\in U^{B}\,\text{ and }\,T_{t}^{B}\left(y^{B}\right)\in U^{B}\right\}$\\
is a $\mathcal{F}_{S\times T}$-set.\\
Now, we have
$\left\{ \left(s,t\right)\in S\times T:T_{s}^{A}\times T_{t}^{B}\left(y^{A},y^{B}\right)\in U^{A}\times U^{B}\right\}$\\

	 =  $\left\{ \left(s,t\right)\in S\times T:\left(T_{s}^{A}\left(y^{A}\right),T_{t}^{B}\left(y^{B}\right)\right)\in U^{A}\times U^{B}\right\}$\\
  
	=  $\left\{ s\in S:T_{s}^{A}\left(x^{A}\right)\in U^{A}\right\} \times\left\{ t\in T:T_{t}^{B}\left(x^{B}\right)\in U^{B}\right\}$\\
 
 = $A\times B$.\\
Hence $A\times B$ is an essential $\mathcal{F}_{S\times T}$-set by Theorem \ref{dynamical essential F set}.

\end{proof}

\noindent\textbf{Acknowledgements.} The author would like to convey his heartfelt thanks to the referee for giving valuable comments in the previous draft of this article.

\end{document}